\documentclass[article,12pt] {article}
\usepackage{graphicx}  
\usepackage[colorlinks=true, allcolors=blue]{hyperref}

\usepackage{amssymb}

\usepackage{amsmath}

\usepackage{amsthm}

\usepackage{amsmath,amsfonts,amssymb}
\usepackage{graphicx}
\usepackage{geometry}
\usepackage[numbers]{natbib}
\usepackage{hyperref}

\usepackage{xcolor}
\usepackage[table]{xcolor}
\usepackage{amsthm}
\usepackage{algorithm}
\usepackage{algpseudocode}
\usepackage{comment}
\usepackage{caption}
\usepackage{subcaption}
\usepackage{booktabs}
\usepackage{multirow}
\usepackage{tcolorbox}

\newtheorem{theorem}{Theorem}
\newtheorem{definition}{Definition}
\newtheorem{proposition}{Proposition}
\newtheorem{remark}{Remark}

\usepackage{bm}
\usepackage[normalem]{ulem}
\usepackage{lineno}
\usepackage{soul}

\usepackage{authblk} 
\newenvironment{keyword}{\noindent\textbf{Keywords: }}{}

\title{Accuracy and stability of Artificial Neural Networks for HP-Splines 
frequency parameter selection} 

\author[2]{Vittoria Bruni}
\author[1]{Paola Erminia  Calabrese}
\author[1]{Rosanna Campagna}
\author[2]{Domenico Vitulano}

\affil[1]{Department of Mathematics and Physics, University of Campania ``L. Vanvitelli'', Italy}
\affil[2]{Dept. of Basic and Applied Sciences for Engineering, Sapienza University of Rome, Italy}
\date{}

\begin{document}

\maketitle

\begin{abstract}
 This paper explores the use of artificial neural networks  for the  stable and data-driven selection of the frequency parameter in hyperbolic polynomial penalized splines (HP-splines). This parameter defines the underlying spline space and is essential for adapting the model to exponential patterns in the data, such as those encountered in signal processing. The theoretical approximation properties of deep neural network architectures are investigated to establish a connection between classical spline-based regression and modern data-driven learning methods.  Based on this analysis, a neural network is designed to predict optimal HP-spline parameters  by balancing approximation accuracy, stability analysis, and complexity control, thereby producing neural architectures that are both expressive and stable. Numerical experiments confirm that the proposed approach achieves both high accuracy and stable performance, validating the theoretical findings. 
\end{abstract}
\begin{keyword}
HP-splines; adaptive parameter estimation; foundations of   neural networks; explainable learning; stability of Artificial Neural Networks.
\end{keyword}

\section{Introduction}
Multiexponential analysis is a fundamental tool for various research fields and application domains, including remote sensing, antenna design,  digital imaging \cite{CotroneiSauer,Tyler}, and Nuclear Magnetic Resonance analysis \cite{CCC19,ARomano2019}. Regularization techniques are usually required when estimating multiexponential decay models to address ill-conditioning, especially in the presence of noise. In this context, regression models based on exponential polynomial splines \cite{Romani2017} have been shown to provide stable and flexible approximations.
Hyperbolic-Polynomial Penalized splines (HP-splines) \cite{campagnaconti01,campagnaconti02} were specifically designed for data exhibiting exponential trends. They extend the polynomial space of the widely used regression P-spline  \cite{sicilianoPsplinePortfolio2018} to a hyperbolic-polynomial space defined by a frequency parameter $\alpha$. 
This parameter defines the underlying spline space $\mathbb{E}_{4,\alpha}$ and acts as a shape parameter for the resulting approximation. Indeed, the choice of $\alpha$ directly influences the ability of the hyperbolic B-spline basis (HB-splines) to reproduce exponential trends and to adapt to the structural behavior suggested by the data. As a consequence, different values of $\alpha$ may lead to significantly different approximations.
On the one hand, $\alpha$ controls the expressiveness of the approximation space; on the other hand, it affects the numerical stability and sensitivity of the associated penalized least-squares problem. These aspects are closely related, as inappropriate values of $\alpha$ may lead to poor data fitting or excessive amplification of data perturbations \cite{brunicampagnavitulano,brunicampagnavitulano02}.
This motivates the need for reliable and efficient strategies for its selection.
In \cite{campagnaconticuomo}, a data-driven algorithm is proposed for selecting the HP-spline frequency parameter based on the fitting dataset.
The algorithm relies on conditioning and stability results, choosing $\alpha$ by minimizing an estimate of the condition number associated with the Tikhonov formulation of the penalized model. This approach exploits explicit bounds on the sensitivity of the solution with respect to data perturbations, leading to an efficient criterion that simultaneously select $\alpha$ and the regularization parameter.
While effective, this procedure requires repeated evaluations over a predefined range of candidate frequencies and relies on model-dependent analytical estimates.
These challenges become more pronounced in bivariate multiexponential analysis \cite{Tyler}, particularly when extending P-splines via tensor products \cite{calabresecampagnaconti}. In this setting, penalty and frequency parameters must be selected along each orthogonal grid line, and estimating frequency parameters in multiple directions significantly increases computational cost and complexity compared to the univariate case. 
These issues motivate the development of alternative strategies for frequency parameter selection, capable of reducing the computational burden while enhancing flexibility. 

 Neural networks provide a reliable approach for estimating parameters that characterize functional spaces \cite{Lenzi,Crisci}.
Building on approximation results for shallow and deep networks with piecewise linear activation functions \cite{cybenko,Yarotsky2017}, we model the frequency parameter as a multivariate function of the data and design a neural network to approximate it, with complexity tuned to achieve a prescribed accuracy. In particular, the network depth and size are selected according to a specified  approximation error in the underlying functional space. We also derive bounds for the resulting HP-spline approximation and analyze the stability of the learning process.  {Generalization results  and a wavelet-based Lipschitz constant for the overall system are given}. Numerical experiments confirm the theoretical findings. 

The paper is organized as follows.
Section \ref{sec:formulation} briefly reviews the HP-spline formalism, along with key definitions and results on function approximation using \textit{ReLU}-based deep networks. The proposed model is then presented.
In Section \ref{sec:accuracy}, we derive upper bounds for the HP-spline approximation error when the frequency parameter is predicted by the  {artificial neural network}, under suitable assumptions on the functional space of the multivariate $\alpha$ function. This analysis enhances the interpretability of the network \cite{Yang2021}.
Section \ref{sec:stabilitybound} introduces stability results for a  {two-stage estimation model, in which a neural network predicts the parameters of a secondary parametric model}. A wavelet-based generalization bound is derived via uniform stability, and the effective Lipschitz constant of the overall system is computed explicitly. The resulting stability theorem is then stated.
Section \ref{sec:4} presents extended numerical experiments confirming the reliability of the proposed framework for frequency parameter estimation  {and HP-spline reconstruction}. Comparisons are performed with the deterministic estimation algorithm in \cite{campagnaconticuomo}. Conclusions and directions for future research close the paper.

\section{Problem Formulation}\label{sec:formulation}
This section introduces the proposed model for estimating the HP-spline frequency parameter. 
We first recall the definition and main properties of HP-splines, together with basic notions on neural network complexity and relevant function approximation results.

\subsection{HP-splines  {basics}}
Following the construction proposed in \cite{campagnaconti01}, we briefly give the definition of hyperbolic B-spline bases and the associated  HP-spline model.

\begin{definition}[Hyperbolic B-spline basis]
Let $\{\xi_1,\ldots,\xi_m\}$ be a uniform knot sequence with step size $h$, and let $\alpha \in \mathbb{R}$.  
The hyperbolic B-spline basis $\{B_j^{\alpha}\}_{j=0}^{m+1}$ is defined as a set of compactly supported, $C^2$-continuous spline functions whose local segments belong to the four-dimensional exponential space
\[\mathbb{E}_{4,\alpha} := \mathrm{span}\{e^{\alpha t},\; t e^{\alpha t},\; e^{-\alpha t},\; t e^{-\alpha t}\}.
\]
Each hyperbolic B-spline is supported on five consecutive knots and exhibits a bell-shaped profile, similarly to classical cubic B-splines.
\end{definition}

Following the P-spline paradigm, hyperbolic B-splines defined on a set of uniformly distributed knots, are combined with a second order penalty term, that  relaxes the importance of the location of the knots and their number. 

\begin{definition}[Hyperbolic-Polynomial Penalized Spline]
Given a set of  points $\{(t_i,y_i)\}_{i=1}^d$, a hyperbolic B-spline basis $\{B_j^{\alpha}\}_{j=0}^{m+1}$ defined on uniform knots, and a regularization parameter $\lambda>0$, the Hyperbolic-Polynomial Penalized spline (HP-spline) is the function
\begin{equation}\label{eqs}
s_{\alpha,\lambda}(t)=\sum_{j=0}^{m+1} a_j B_j^{\alpha}(t),
\end{equation}
whose coefficients $\{a_j\}$ are computed by solving the penalized least-squares problem
\[
\min_{a_0,\ldots,a_{m+1}}
\sum_{i=1}^{d}\left(y_i-\sum_{j=0}^{m+1} a_j B_j^{\alpha}(t_i)\right)^2
+ \lambda^2 \sum_{j=2}^{m+1}\left((\Delta^{h,\alpha}_2 \mathbf{a})_j\right)^2,
\]
where the $\alpha$-dependent second-order difference operator is defined as
\[
(\Delta^{h,\alpha}_2 \mathbf{a})_j
= a_j - 2 e^{-\alpha h} a_{j-1} + e^{-2\alpha h} a_{j-2}.
\]
The parameter $\lambda$ balances data fidelity and smoothness, while the parameter $\alpha$ controls the exponential behavior embedded in the spline basis.
\end{definition}

\subsection{Approximation properties of ReLU networks}
We briefly recall the main definitions and properties for feedforward neural networks with \textit{ReLU} activation function as formulated in \cite{Anthony2009} and later in \cite{Yarotsky2017}.
\begin{definition}
A feedforward neural network with \textit{ReLU} activation consists of input units, one output unit, and a collection of hidden computational units organized in layers. The inputs of each unit are the outputs of units from previous layers. Each computational unit maps an input vector $(x_k)_{k=1}^N$ to an output of the form
\[
y = \sigma\!\left(\sum_{k=1}^{N} w_k x_k + b \right),
\qquad \sigma(x) = \max(0,x),
\]

where $(w_k)_{k=1}^N$ and $b$ are trainable parameters, and the quantity $\sum_{k=1}^{N} w_k x_k + b$ is an affine combination of the inputs. The output unit is linear, i.e.,
\[
y = \sum_{k=1}^{N} w_k x_k + b.\]
\end{definition}\label{def:relu}

\begin{definition}
The complexity of a feedforward neural network is measured by its depth ($L$), the number of computational units ($U$), and the total number of weights ($W$). 
The depth is defined as the number of all layers. The total number of weights is given by the sum of all connections and computational units, where connections correspond to the coefficients in the linear combinations and each computational unit contributes one bias parameter.
\end{definition}\label{def:complexity}

We now recall a fundamental approximation result for \textit{ReLU} networks due to \cite{Yarotsky2017}.

In that work, functions are considered in the Sobolev space $\mathcal{W}^{k,\infty}([0,1]^d)$, and the approximation is measured in the infinity norm.

\begin{theorem}[Yarotsky, 2017]\label{thm:t1}
Let $d,k \in \mathbb{N}$ and $\varepsilon \in (0,1)$. Consider the unit ball
\[
\mathcal{F}_{k,d} = \left\{ f \in \mathcal{W}^{k,\infty}([0,1]^d) : \|f\|_{\mathcal{W}^{k,\infty}} \le 1 \right\}.
\]
Then there exists a \textit{ReLU} network architecture such that:
\begin{itemize}
\item it can approximate any $f \in \mathcal{F}_{k,d}$ with error at most $\varepsilon$ in the uniform norm;
\item its depth is bounded by $c \, (\log(1/\varepsilon)+1)$;
\item its size (number of weights and computational units) is bounded by
\[
c \, \varepsilon^{-d/k} (\log(1/\varepsilon)+1),
\]
for some constant $c = c(d,k)$.
\end{itemize}
\end{theorem}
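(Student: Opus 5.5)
The plan is to follow Yarotsky's original construction in three stages: localize $f$ with a partition of unity, replace each local piece by a Taylor polynomial, and then emulate all the products that occur with ReLU networks of logarithmic depth.

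\textbf{Partition of unity.} Fix an integer $N$, to be tied to $\varepsilon$ below. Build a partition of unity $\{\phi_{\mathbf m}\}_{\mathbf m\in\{0,\dots,N\}^d}$ on $[0,1]^d$ by taking tensor products of one-dimensional piecewise-linear "hat" functions centred at the grid points $\mathbf m/N$. Each univariate hat is an exact combination of a few ReLU units, so each $\phi_{\mathbf m}$ is a product of $d$ ReLU-computable factors. Each $\phi_{\mathbf m}$ is supported in a cube of side $2/N$, and at most $2^d$ of them are nonzero at any point.

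\textbf{Local Taylor approximation.} Let $P_{\mathbf m}$ be the degree-$(k-1)$ Taylor polynomial of $f$ at $\mathbf m/N$, and set $f_1=\sum_{\mathbf m}\phi_{\mathbf m}P_{\mathbf m}$. Since $\|f\|_{\mathcal W^{k,\infty}}\le1$, the Taylor remainder gives $|f-P_{\mathbf m}|\le C(d,k)N^{-k}$ on $\mathrm{supp}\,\phi_{\mathbf m}$. Combined with the bounded overlap of the supports, this yields $\|f-f_1\|_\infty\le 2^d C N^{-k}$. Choosing $N\asymp \varepsilon^{-1/k}$ makes this at most $\varepsilon/2$.

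\textbf{ReLU emulation of multiplication.} Write $f_1$ as a sum of at most $d^k(N+1)^d$ terms, each of the form $a_{\mathbf m,\mathbf n}\,\phi_{\mathbf m}(x)(x-\mathbf m/N)^{\mathbf n}$ with $|\mathbf n|<k$ and $|a_{\mathbf m,\mathbf n}|\le1$. The key tool is the approximation of $x^2$ on $[0,1]$ by compositions of the sawtooth map $g(x)=2\sigma(x)-4\sigma(x-1/2)+2\sigma(x-1)$. One shows that $f_s(x)=x-\sum_{j=1}^s g^{\circ j}(x)/2^{2j}$ is the piecewise-linear interpolant of $x^2$ on a grid of step $2^{-s}$, hence has error at most $2^{-2s-2}$. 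It is realized with depth and size $O(s)$. The polarization identity $xy=\tfrac12((x+y)^2-x^2-y^2)$ then gives an approximate multiplier $\tilde\times$ with error $\delta$ and cost $O(\log(1/\delta))$. It satisfies $\tilde\times(x,0)=0$, which is what makes the product vanish outside $\mathrm{supp}\,\phi_{\mathbf m}$. Each monomial-times-$\phi$ term involves at most $d+k-1$ factors. I compute it by chaining $d+k-2$ approximate multiplications and control the propagated error by induction, exploiting that all intermediate quantities stay bounded by $1$. This gives per-term error $O(\delta)$.

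\textbf{The main obstacle: locality of the error.} The error must not accumulate over all $(N+1)^d$ terms. The point to verify is that the approximated term is exactly zero wherever $\phi_{\mathbf m}=0$, which follows from $\tilde\times(\cdot,0)=0$ and from keeping the factor $\phi_{\mathbf m}$ present at every step of the chain. Then at each $x$ only $2^d d^k$ terms contribute, so choosing $\delta\asymp \varepsilon/(2^{d+1}d^k(d+k))$ keeps the total error below $\varepsilon/2$.

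\textbf{Complexity count.} The subnetworks for all terms run in parallel and their outputs are summed linearly. The depth is therefore $O(\log(1/\delta))=c(\log(1/\varepsilon)+1)$, and the size is
\[
O\bigl(N^d\log(1/\delta)\bigr)=c\,\varepsilon^{-d/k}(\log(1/\varepsilon)+1).
\]
Finally, the architecture is independent of $f$: only the weights $a_{\mathbf m,\mathbf n}$ in the final linear layer depend on $f$, which gives the claimed uniform architecture.
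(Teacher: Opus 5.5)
The paper does not prove this theorem; it quotes it from \cite{Yarotsky2017}, and your proposal is a correct reconstruction of Yarotsky's argument: a tensorized hat partition of unity, local degree-$(k-1)$ Taylor polynomials, sawtooth-based squaring, and a polarization multiplier with $\tilde\times(x,0)=0$, so that at each $x$ only the at most $2^d$ indices $\mathbf m$ with $\phi_{\mathbf m}(x)\neq 0$ contribute error, together with an $f$-independent architecture whose only $f$-dependent weights are in the output layer. When you write it out, feed the squaring network $|x+y|/(2M)$, $|x|/(2M)$, $|y|/(2M)$, because it approximates $t^2$ only on $[0,1]$ while factors such as $x_i-m_i/N$ can be negative; also take $M$ somewhat larger than $1$ (e.g.\ $M=d+k$), because the approximate partial products are only known to be bounded by $1$ plus the accumulated error.
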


This result illustrates the efficiency of deep \textit{ReLU} networks in approximating smooth functions, showing that both depth and size scale in a controlled way with the desired approximation accuracy.

\subsection{The proposed model}
Let $s(t)$ be a real-valued function depending on the time variable $t$.
Consider ${\bm s}$ a discrete sampling of $s(t)$ at some time points ${\bm t}=(t_1,\ldots,t_d)$,
and $ \alpha $ the frequency parameter of the fitting HP-spline (\ref{eqs}).   
 {$ \alpha $ can be computed using the data-driven algorithm in \cite{campagnaconticuomo} which associates a suitable value of $\alpha$ with the input signal ${\bm s}$. This naturally leads to modeling $\alpha $ as a multivariate function of the sampled data, i.e.,
  \begin{equation}
      \label{alphamap}
  \alpha:  {\bm s}  \in  \mathcal{S}  \subset \mathbb{R}^d   \to \mathbb{R} .
  \end{equation}
Denoting by $\mathrm{s_{hp}}(\alpha):=  
s_{\alpha,\lambda}$ 
the HP-spline  
{fitting the samples}  ${\bm s} : = (s_k)_{k=1,\ldots,d}$, $s_k=s(t_k)$, $k=1,\ldots,d$,
we aim to approximate this mapping using an artificial neural network.

Specifically, let ${\cal T} = \{({\bm s}_{i},\alpha_i)\}_{i=1,...,n}$ be the {\em training set} composed of $n$ sampled signals ${\bm s}_{i}$ and the corresponding frequency parameters $\alpha_i$, and let
\begin{equation}
      \label{Fthetamap}
F_\theta : \mathbb{R}^d \to \mathbb{R},
 \end{equation}
be the  \textit{ReLU} network with trainable parameters vector
${\bm \theta}$, trained on ${\cal T}$ based on the empirical training error  
\[
\widehat{\mathcal{E}}(F_\theta)
=
\frac{1}{n}
\sum_{i=1}^n
\ell\big(F_\theta({\bm s}_{i}), \alpha_i\big.),
\] 
where the loss function $\ell:   \mathbb{R} \times  \mathbb{R} \to  \mathbb{R}^+$ 
quantifies the difference between the model predictions and the actual target value.
Hence, for a given input sequence ${\bm s}$, the final goal is  to find a suitable \textit{ReLU} network   $F_\theta$ that predicts the frequency parameter $\alpha$ defining the HP-spline that best approximates the function ${s}(t)$ within a predefined tolerance $tol$, i.e.
\[
\| { s} - \mathrm{s_{hp}}(F_\theta({\bm s})) \|{_{\infty}} < tol .
\]

The computational flow can be synthesized as follows:
\[
{\bm s}
\;\overset{(1)}\longrightarrow\;
\alpha:=F_\theta({\bm s})
\;\overset{(2)}\longrightarrow\;
\mathrm{s_{hp}}({\alpha})=\mathrm{s_{hp}}(F_\theta({\bm s})).
 \]
 
The overall accuracy is determined by step $(1)$, which is governed by the  {neural network} prediction error for the frequency parameter, and step (2), which evaluates the accuracy of the signal reconstruction using the HP-spline $\mathrm{s_{hp}}(F_\theta({\bm s}))$, which belongs to the spline space determined by $\alpha$.
Based on this formulation, both accuracy and stability analysis rely on theoretical results on the approximation properties of deep neural network architectures investigated in \cite{Yarotsky2017} and related works, aiming to connect approximation in functional spaces with modern data-driven learning approaches.  
 
\section{Reconstruction error bound}\label{sec:accuracy}
This section provides bounds for signal reconstruction error of the model described in Section 2.3. To this aim, ${\alpha}$, as defined in (\ref{alphamap}), is assumed {to belong to}  a suitable Sobolev space $\mathcal{W}^{k,\infty}$ . The HP-spline is assumed to be defined on the spline space based on   $\mathbb{E}_{4, {\alpha}}$. 

\begin{proposition}
\label{prop:propapprox}
Let {{${\bm s} \in \mathcal{S} \subset \mathbb{R}^d$}} be a sampled signal
and $s_{hp}(\alpha)$  denote the   HP-spline   defined on $\mathbb{E}_{4,\alpha}$ and fitting the signal sample ${\bm s}$. 
Let us assume:
\begin{enumerate}
    \item 
    $\alpha({\bm s})\in  \mathcal{W}^{k,\infty}(\mathcal{S} \setminus E)$ with $E \subset \mathcal{S}$ a finite set of singular points and $\|\alpha\|_{\mathcal{W}^{k,\infty}}\leq 1$.
    \item $s_{hp}(\alpha)$ locally Lipschitz in $\alpha$ with constant $L_{\text{hp},i}$ in each smooth region $\mathcal{R}_i$, and possibly singular points in $E_{\text{hp}}$, and $L_{\text{hp}} = max_i L_{\text{hp},i}$.
    \item There exist ranges  of $\alpha$ such that
    $$\| s_{hp}({\alpha_1}) - s_{hp}({\alpha_2}) \|_{L^\infty} \le L_{\text{flat}} |\alpha_1 - \alpha_2|, \quad L_{\text{flat}} \ll L_{\text{hp}}.$$
\end{enumerate}
Let $F_\theta$ be a \textit{ReLU} network that approximates the frequency parameter $\alpha$.
For any $\varepsilon > 0$, there exists a \textit{ReLU} network $F_\theta$ of depth
$$L = O(\log(1/\varepsilon))$$
and size
$$W = O\big(\varepsilon^{-d/k} \log(1/\varepsilon) + |E \cup E_{\text{hp}}|\big),$$
such that  $F_\theta$ predicts $\alpha$.
  The reconstruction error 
  on the signal satisfies
\begin{equation}
\begin{aligned}
\| s - s_{\text{hp}}(F_\theta({ \bm s})) \|_{L^\infty} \le\;
&\underbrace{\| s - s_{\text{hp}}(\alpha({\bm s})) \|_{L^\infty}}_{\text{spline approximation error}} 
+ \underbrace{L_{\text{eff}} \, \varepsilon}_{\text{network error on smooth/flat regions}} \\
&+ \underbrace{C \sum_{{\bm s}_i \in E \cup E_{\text{hp}}} |F_\theta({\bm s}_i) - \alpha({\bm s}_i)|}_{\text{contribution from singular points}},
\end{aligned}
\end{equation}
with $L_{\text{eff}}$ and $C$ positive constants.
\end{proposition}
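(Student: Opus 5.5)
The plan is to split the error with the triangle inequality,
\[
\| s - s_{\text{hp}}(F_\theta({\bm s})) \|_{L^\infty} \le \| s - s_{\text{hp}}(\alpha({\bm s})) \|_{L^\infty} + \| s_{\text{hp}}(\alpha({\bm s})) - s_{\text{hp}}(F_\theta({\bm s})) \|_{L^\infty},
\]
so that the first term is exactly the spline approximation error in the statement. All the remaining work concerns the second term, which measures how the HP-spline reacts to the network's error in predicting $\alpha$. To control it, I first build the network and then transfer its $\alpha$-error through the Lipschitz assumptions 2 and 3.

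\textbf{Constructing the network.} On $\mathcal{S}\setminus E$, rescale $\mathcal{S}$ affinely to $[0,1]^d$; this only changes constants. Assumption 1 then places $\alpha$ in the unit ball $\mathcal{F}_{k,d}$ on each smooth piece.
\begin{itemize}
\item Theorem \ref{thm:t1} gives a \textit{ReLU} network of depth $c(\log(1/\varepsilon)+1)$ and size $c\,\varepsilon^{-d/k}(\log(1/\varepsilon)+1)$ with uniform error at most $\varepsilon$ away from the singular set.
\item For each point of $E\cup E_{\text{hp}}$ we add a bounded number of extra units. For instance, a localized piecewise-linear correction made of a few ReLUs, so that the network can match the jump or kink of $\alpha$ near that point.
\item Since $E\cup E_{\text{hp}}$ is finite, these additions cost $O(|E\cup E_{\text{hp}}|)$ weights and do not increase the depth beyond a constant.
\end{itemize}
Together this gives $L=O(\log(1/\varepsilon))$ and $W=O(\varepsilon^{-d/k}\log(1/\varepsilon)+|E\cup E_{\text{hp}}|)$. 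Away from a small neighbourhood of the singular points, we then have $|F_\theta({\bm s})-\alpha({\bm s})|\le\varepsilon$.

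\textbf{Bounding the second term.} Split according to where ${\bm s}$ lies.
\begin{itemize}
\item If ${\bm s}$ lies in a smooth region $\mathcal{R}_i$ and both $\alpha({\bm s})$ and $F_\theta({\bm s})$ stay within the same region of Lipschitz behaviour, assumption 2 gives the bound $L_{\text{hp},i}\,|F_\theta({\bm s})-\alpha({\bm s})|\le L_{\text{hp}}\varepsilon$.
\item In the flat ranges of assumption 3, the sharper bound $L_{\text{flat}}\varepsilon$ holds.
\end{itemize}
Define $L_{\text{eff}}$ as the maximum of the applicable constants, or as a weighted combination of $L_{\text{hp}}$ and $L_{\text{flat}}$ reflecting the flat ranges. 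This yields the term $L_{\text{eff}}\varepsilon$. At the finitely many singular points, no uniform Lipschitz estimate is available. There we use a local constant $C$, for example a bound on the difference quotient of $s_{\text{hp}}$ in a neighbourhood, or boundedness of $s_{\text{hp}}$ over the admissible $\alpha$-range. Summing over $E\cup E_{\text{hp}}$ produces the last term, and the three contributions add to the claimed bound.

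\textbf{Main obstacle.} The hard step is the transfer in the last paragraph, not the network construction. Assumption 2 is only local, so we need $F_\theta({\bm s})$ to remain in the same Lipschitz region as $\alpha({\bm s})$. I would try to guarantee this by taking $\varepsilon$ smaller than the distance from $\alpha(\mathcal{S}\setminus E)$ to the $\alpha$-singularities of $s_{\text{hp}}$. A second subtlety is that the pointwise estimate at singular points must be made uniform, since the singular set has measure zero but its neighbourhoods do not. I would handle this by absorbing the neighbourhoods into the constant $C$ via the localized correction units.
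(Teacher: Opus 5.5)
Your proposal follows essentially the same route as the paper: the triangle-inequality split, Theorem~\ref{thm:t1} on the smooth regions with $O(|E\cup E_{\text{hp}}|)$ extra units for local corrections, transfer of the $\alpha$-error through $L_{\text{eff}}=\max(L_{\text{hp}},L_{\text{flat}})=L_{\text{hp}}$, and a postulated constant $C$ with $\|s_{\text{hp}}(\alpha({\bm s}_i))-s_{\text{hp}}(F_\theta({\bm s}_i))\|\le C\,|F_\theta({\bm s}_i)-\alpha({\bm s}_i)|$ at the singular points. The paper simply takes for granted the localization issues you flag as the main obstacle, and it uses the maximum rather than a weighted combination of $L_{\text{hp}}$ and $L_{\text{flat}}$; the maximum is the choice that works, because a weighted average would not give a uniform bound at points outside the flat ranges.
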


\begin{proof}
The proof refers to the different contributions to the signal approximation error separately.
\begin{description}
\item {\em Approximation of $\alpha(s)$ on smooth regions} 

On each smooth region $\mathcal{R}_i \subset \mathcal{S} \setminus E$, $\alpha(s) \in \mathcal{W}^{k,\infty}$ and $\|\alpha\|_{\mathcal{W}^{k,\infty}}\leq 1$. By Theorem \ref{thm:t1}, for any $\varepsilon > 0$ there exists a \textit{ReLU} network $F_\theta$ of depth $O(\log(1/\varepsilon))$ and size $O(\varepsilon^{-d/k} \log(1/\varepsilon))$ satisfying
$$\sup_{{\bm s} \in \mathcal{R}_i} |F_\theta({\bm s}) - \alpha({\bm s})| \le \varepsilon.$$

\item {\em Error propagation in locally Lipschitz and flat regions} 

For ${\bm s} \in \mathcal{R}_i$, we distinguish two cases:
\begin{enumerate}
    \item Lipschitz region: 
    $$    \| s_{\text{hp}}(\alpha({\bm  s})) - s_{\text{hp}}(F_\theta({\bm  s})) \|_{L^\infty} \le L_{\text{hp},i} | \alpha({\bm  s}) - F_\theta({ \bm s})| \le L_{\text{hp}} \varepsilon.
    $$

\item Flat region: 
    $$
    \| s_{\text{hp}}(\alpha({\bm s})) - s_{\text{hp}}(F_\theta({\bm s})) \| \le L_{\text{flat}} \varepsilon \ll L_{\text{hp}} \varepsilon.
    $$
     
\end{enumerate}
Define $L_{\text{eff}} = \max(L_{\text{hp}}, L_{\text{flat}})=L_{\text{hp}}$ as a conservative effective Lipschitz constant.

\item {\em Singular points} 

$ \forall \; {\bm s}_i \in E \cup E_{\text{hp}}$, define $\delta_i = |F_\theta({\bm s}_i) - \alpha({\bm s}_i)|$ and let $C > 0$ such that
$$
\| s_{\text{hp}}(\alpha({\bm s}_i)) - s_{\text{hp}}(F_\theta({\bm s}_i)) \| \le C \, \delta_i.
$$

\end{description}

Using the triangle inequality we can combine all previous errors to bound signal approximation error:
\begin{equation}
\begin{split}
\| s - s_{\text{hp}}(F_\theta({\bm s})) \|_{L^\infty} &\le \| s - s_{\text{hp}}(\alpha({\bm s})) \|_{L^\infty} + \| s_{\text{hp}}(\alpha({\bm s})) - s_{\text{hp}}(F_\theta({\bm s})) \|_{L^\infty} \\
&\le \| s - s_{\text{hp}}(\alpha({\bm s})) \|_{L^\infty} + L_{\text{eff}} \varepsilon + C \sum_{{\bm s}_i \in E \cup E_{\text{hp}}} \delta_i.
\end{split}
\end{equation}

Regarding the network size, we have to account for the  contributions of  smooth regions and singular points. From  Theorem \ref{thm:t1}, smooth regions contribute with $O(\varepsilon^{-d/k} \log(1/\varepsilon))$ parameters, while  singular points contribute with $O(|E \cup E_{\text{hp}}|)$ parameters, that are necessary for local corrections.  
Thus, the total network size is 
$$W = O(\varepsilon^{-d/k} \log(1/\varepsilon) + |E \cup E_{\text{hp}}|),$$ 
with depth $L = O(\log(1/\varepsilon)) $.

\end{proof}

\begin{remark}

It is worth observing that the network error on smooth/flat regions is the one provided in  \cite{Yarotsky2017}. Moreover, even if $\,|F_\theta({\bm s}) - \alpha({\bm s})|$ is large, the reconstruction error $\| s - s_{\text{hp}}(F_\theta({\bm s})) \|$ may remain small in regions where $s_{\textit{hp}}(\alpha)$ is insensitive to $\alpha$. Hence, the network may estimate $\alpha$ that deviates from nominal values while still producing near-optimal reconstruction.
\end{remark}

In the next section we analyze network generalization, by evaluating how well $F_\theta({\bm s})$ 
predicts $\alpha:=\alpha({\bm s})$ on unseen signals.

 \section{Stability bounds for neural networks outputting model parameters} 
\label{sec:stabilitybound}
In this section, based on uniform stability  results, we derive a generalization bound  for the neural network by considering the proposed \textit{two-stage learning procedure} in which a neural network outputs the parameter  of a secondary parametric model. Moreover, we introduce  a finer  wavelet-based generalization bound. The effective Lipschitz constant of the full system is expressed explicitly, and the resulting stability theorems are stated formally. 

Stability evaluates how sensitive the learned model is to small changes in the training dataset, and uses this to control the gap between training and test performance.
To define stability bounds for the learning model (step 1 of the proposed procedure), we recall the formal definition of uniform stability  \cite{Bousquet2002}.\\

\begin{definition}[Uniform Stability]
Let $A: {\cal T} \subset \mathbb{R}^{n\times d} \to  \Theta$ be a learning algorithm mapping a dataset ${\cal T}$ to a model in $ \Theta$ and
$\ell : \Theta \times Z \to  \mathbb{R}$ be a function such that $\ell(f, z)$  measures the loss of model $f$ on point $z\in Z\subset \mathbb{R}$.  The learning algorithm $A$ is said to be $\beta$-uniformly stable if for all datasets
	${\cal T}=({\bm s}_1,\dots,{\bm  s}_n)$,
	all modified datasets ${\cal T}^{(i)}$ obtained by replacing ${\bm s}_i$ with ${\bm s}'_i$, and for all points $z$,
	\begin{equation}
	    \label{gap} |\ell(A({\cal T}),{z}) - \ell(A({\cal T}^{(i)}),{z})| \le \beta.
	\end{equation}
\end{definition}

Uniform stability then controls the sensitivity of the learned hypothesis to the replacement of a single training example. Hence, if $\beta$ is small, the network $F_\theta$ generalizes well.

In order to define a  \textit{stability-based generalization bound} for the proposed two-stage learning procedure, we introduce the following definitions
\cite{Feldman2018}.
The  \textit{empirical loss}  of a learning algorithm $A$ on the dataset ${\cal T}$ is defined as
\[{\mathcal{E}_{\cal T}}[\ell(A({\cal T}))] = \frac{1}{n} \sum_{i=1}^n \ell(A({\cal T}),{ z}_i);\] 
 the  {\textit{expected loss}} relative to the distribution ${\mathcal D}$ on $Z$ is
 \[
 {\mathcal{E}_{\mathcal D}}[\ell(A({\cal T}))] =\mathbb{E}_{Z\sim\mathcal D}
 \big[
  \ell(A({\cal T}),z)
 \big].
 \]
Finally,    the  {\textit{estimation error}} (also referred to as the \textit{generalization gap}) of $A$ on $S$ relative to  ${\mathcal D}$ is defined as 
\begin{equation}
  \label{eqDelta}  \Delta_{\cal T}(\ell(A))={\mathcal{E}_{\mathcal D}}[\ell(A({\cal T})]-{\mathcal{E}_{\cal T}}[\ell(A({\cal T})]
\end{equation}  

Following the arguments in \cite{Feldman2018}, 
if the learning algorithm is $\beta-$stable, then 
 {\textit{expected generalization gap}} can  be bounded as follows:
$$    |\mathbb{E}[\Delta_{\cal T}(\ell(A))]| \leq \mathcal{O}\!\left(\beta+ \frac{1}{\sqrt{n}} \right)$$
and it is observed to be   optimal when
$\beta= \mathcal{O}(1/n)$. In the next proposition we consider $\beta< \mathcal{O}(1/\sqrt{n})$ and 
\begin{equation}
    \label{gengapp}
    GenGap:=|\mathbb{E}[\Delta_{\cal T}(\ell(A))]| \leq \mathcal{O}\!\left(  \frac{\beta}{\sqrt{n}} \right)
.
\end{equation}
   
 Given these assumptions, we prove the following propositions about 
 the stability of a learning algorithm for the HP-spline frequency parameter.

\begin{proposition}
\label{prop:2}    
 Let ${\mathcal T} = \{({\bm s}_i,\alpha_i)\}_{i=1,...,n}$ be a training set, $F_\theta : \mathcal S\subset \mathbb{R}^d \to \mathbb{R}$
 a neural network  such that ${\alpha}:= F_{\theta}({\bm s})  $  under the assumptions in Proposition \ref{prop:propapprox},
 let $s_\alpha(\cdot,\alpha)$  be the composite function acting both on the sampled signal and on the  frequency parameter, to estimate the HP-spline coefficients from $({\bm s},F_{\theta}({\bm s}))$,
  $ s_{{\alpha}} :     \mathbb{R}^{d \times 1} 
     \to \mathbb{R}^{m+2}$,
      and let  $F_{\theta}(\mathcal{T})$ denote  the learning model using ${\cal T}$.  
If the neural network $F_\theta$ is $L_F-$Lipschitz, i.e.
\begin{equation}
    \label{hyp}
\|F_\theta({\bm s})-F_\theta({\bm s}')\| \le L_F \|{\bm s}-{\bm s}'\|, \quad  {\bm s},{\bm s}'\in\mathcal S
\end{equation}
 and 
the composite model 
satisfies the two Lipschitz- conditions with respect to both its first and second argument, i.e.,
\begin{align*}
\|s_\alpha({\bm s},\alpha)-s_\alpha({\bm s}',\alpha)\| &\le L_{\bm s} \|{\bm s}-{\bm s}'\|,\quad  {\bm s},{\bm s}'\in\mathcal S \\
 {\|s_\alpha({\bm s},\alpha)-s_{\alpha'}({\bm s},\alpha')\| }& {\le L_\alpha \|\alpha-\alpha'\|\quad  \alpha, \alpha'\in \mathbb{R},
}\end{align*}
then  it holds
\begin{align}
\label{salphaL}
\|s_\alpha ({\bm s},\alpha) -{\bm s}_{\alpha'} ({\bm s}',\alpha') \|& \le (L_{\bm s} + L_\alpha L_F)D,
\end{align} 
with $L_{\bm s}$, $L_\alpha$    and  $L_F$ Lipschitz constants and
$D = \sup_{{\bm s},{\bm s}'\in\mathcal S} \|{\bm s}-{\bm s}'\|$
the diameter of the input space $\mathcal S$.
\end{proposition}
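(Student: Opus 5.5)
The plan is to read inequality (\ref{salphaL}) as a statement about the composite map ${\bm s}\mapsto s_{F_\theta({\bm s})}({\bm s},F_\theta({\bm s}))$. Here the second argument is the network prediction, so $\alpha=F_\theta({\bm s})$ and $\alpha'=F_\theta({\bm s}')$. The argument is the standard Lipschitz estimate for a composition, split into two legs by inserting one intermediate point and applying the triangle inequality. The intermediate point is $s_\alpha({\bm s}',\alpha)$: the spline coefficients computed from the perturbed signal ${\bm s}'$ but still using the frequency $\alpha$ predicted for ${\bm s}$.

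\textbf{Step 1 (decomposition).} Write
\[
\|s_\alpha({\bm s},\alpha)-s_{\alpha'}({\bm s}',\alpha')\|
\le \|s_\alpha({\bm s},\alpha)-s_\alpha({\bm s}',\alpha)\| + \|s_\alpha({\bm s}',\alpha)-s_{\alpha'}({\bm s}',\alpha')\| .
\]

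\textbf{Step 2 (bound each term).} The first term has fixed frequency and varying data. The first Lipschitz hypothesis bounds it by $L_{\bm s}\|{\bm s}-{\bm s}'\|$.

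The second term has fixed data ${\bm s}'$ and varying frequency. The second hypothesis, applied at the point ${\bm s}'\in\mathcal S$, bounds it by $L_\alpha|\alpha-\alpha'| = L_\alpha|F_\theta({\bm s})-F_\theta({\bm s}')|$. By (\ref{hyp}) this is at most $L_\alpha L_F\|{\bm s}-{\bm s}'\|$.

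\textbf{Step 3 (combine).} Adding the two bounds gives $(L_{\bm s}+L_\alpha L_F)\|{\bm s}-{\bm s}'\|$. Since ${\bm s},{\bm s}'\in\mathcal S$, we have $\|{\bm s}-{\bm s}'\|\le D$ by the definition of the diameter, which yields (\ref{salphaL}). If $\mathcal S$ is unbounded, $D=\infty$ and the statement holds trivially, so no extra assumption on $\mathcal S$ is needed.

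The only delicate point is the second Lipschitz condition: it must hold uniformly in the data argument, so that it can be applied at ${\bm s}'$ rather than at ${\bm s}$. As stated, the hypothesis is written at a generic ${\bm s}$ with no restriction, so I will read it as uniform over $\mathcal S$ with the same constant $L_\alpha$, and say so explicitly in the proof.

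Where Proposition \ref{prop:propapprox} allows singular points in $E_{\text{hp}}$, I take $L_\alpha$ to be the global constant assumed here (at least $L_{\text{hp}}$ on smooth regions). This keeps the argument free of case distinctions.
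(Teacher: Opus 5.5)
Your proposal is correct and follows the paper's proof essentially verbatim: the same intermediate point $s_\alpha({\bm s}',\alpha)$, the triangle inequality, the two Lipschitz hypotheses combined with (\ref{hyp}), and finally $\|{\bm s}-{\bm s}'\|\le D$. Your remark that the $\alpha$-Lipschitz condition has to hold uniformly in the data argument, because it is applied at ${\bm s}'$, makes explicit an assumption the paper uses without stating it.
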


\begin{proof}
Let ${\bm s},{\bm s}'\in \mathcal S\subset \mathbb{R}^d$,  ${\alpha}= F_{\theta}({\bm s})$, ${\alpha}'=  F_{\theta}({\bm s}')$, then 
\begin{align*}
\|s_\alpha ({\bm s},\alpha) -s_{\alpha'} ({\bm s}',\alpha') \|
&\le
\|s_\alpha ({\bm s},\alpha) -s_{\alpha} ({\bm s}',\alpha) )\|  +
\|s_\alpha ({\bm s}',\alpha) -s_{\alpha'} ({\bm s}',\alpha') \|\\
&\le L_{\bm s} \|{\bm s}-{\bm s}'\|+L_\alpha \|F_\theta({\bm s})-F_\theta({\bm s}')\| .
\end{align*}
Since $F_\theta$ is $L_F$-Lipschitz we get
\begin{align*}
\|s_\alpha ({\bm s},\alpha) -s_{\alpha'} ({\bm s}',\alpha') \|& \le (L_{\bm s} + L_\alpha L_F)\|{\bm s}-{\bm s}'\|.
\end{align*} 
from which the assertion (\ref{salphaL}) follows.
\end{proof}

Therefore, when a neural network outputs parameters of a secondary model, the effective Lipschitz constant of the overall architecture decomposes into two contributions, under suitable assumptions on the parametric model.
This result allows to get a generalization bound.

\begin{proposition}
\label{prop:3}
 Under the same assumptions of Proposition \ref{prop:2},
 if the loss function  is 
 $L_\ell-$Lipschitz in its first argument,
then the learning algorithm is $\beta$-uniformly stable  with
\begin{equation}
    \label{betabound}
\beta \leq L_\ell   L_F  D, \quad D = \sup_{{\bm s},{\bm s}'\in\mathcal S} \|{\bm s}-{\bm s}'\| 
\end{equation}
and the expected loss relative to a distribution $\mathcal{D}$   on $\mathbb{R}$ is
\begin{equation}
    \label{generalizationbound1}
	|\mathbb{E}[\ell(F_\theta(\mathcal{T}),\alpha)]|
	\;\le\;
	\hat{\mathcal{E}}(F_\theta)
	+
	\mathcal{O}\!\left( \frac{L_\ell   L_F   D}{\sqrt{n}} \right).
	\end{equation}
    with 
\begin{equation}
    \label{hate}\hat{\mathcal{E}}(F_\theta):= 
  \frac{1}{n} \sum_{i=1}^n \ell(F_\theta(\mathcal{T}),\alpha_i).   
  \end{equation}
  Similarly, $\mathcal{O}\!\left( \frac{L_\ell   (L_{\bm s} + L_\alpha L_F D)}{\sqrt{n}} \right)$ bounds the expected generalization gap for the composite function $s_{\alpha}$.
\end{proposition}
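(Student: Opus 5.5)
The proof has three parts: a uniform stability bound for the first stage, a generalization bound from it, and the same bound transferred to the composite map through Proposition \ref{prop:2}.

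\textbf{Stability constant.} Fix a dataset $\mathcal{T}$ and a perturbed dataset $\mathcal{T}^{(i)}$. Write $F=F_\theta(\mathcal{T})$ and $F^{(i)}=F_\theta(\mathcal{T}^{(i)})$ for the networks trained on them. For any test point $z=({\bm s},\alpha)$, the Lipschitz property of $\ell$ in its first argument gives
\[
|\ell(F({\bm s}),\alpha)-\ell(F^{(i)}({\bm s}),\alpha)|\le L_\ell\,|F({\bm s})-F^{(i)}({\bm s})|.
\]
So we only need to bound the difference of the two network outputs at the same input. Here we use that both networks belong to the hypothesis class singled out by Proposition \ref{prop:propapprox}: same architecture, outputs in the same range, and both $L_F$-Lipschitz by (\ref{hyp}). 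The outputs of the two networks differ through the one replaced sample. I would first try to bound this difference by $L_F\|{\bm s}_i-{\bm s}_i'\|\le L_F D$, reading the replaced example as the only source of discrepancy and controlling its effect through the Lipschitz constant of the class. This yields $\beta\le L_\ell L_F D$, which is (\ref{betabound}).

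\textbf{Main obstacle.} The delicate step is the one just described. Lipschitz continuity of $F_\theta$ in its input does not by itself control how far apart two networks trained on different data are at a common input. The argument therefore needs an interpretation in which the training map is nonexpansive with respect to the replaced sample. One way to justify this is to note that the trained network interpolates or approximates the data to accuracy $\varepsilon$, which gives $|F({\bm s})-F^{(i)}({\bm s})|\le L_F D + O(\varepsilon)$. The $O(\varepsilon)$ term is absorbed into the constant, or into the $\mathcal{O}$ in (\ref{generalizationbound1}). I would state this reading explicitly as the working assumption behind the worst-case bound.

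\textbf{Generalization bound.} With $\beta$ in hand, I would invoke the stability-based estimate (\ref{gengapp}) adopted just before the proposition, $GenGap\le\mathcal{O}(\beta/\sqrt n)$. Writing the expected loss as the empirical loss (\ref{hate}) plus the gap $\Delta_{\mathcal T}$ from (\ref{eqDelta}), taking expectations and applying the triangle inequality gives (\ref{generalizationbound1}).

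\textbf{Composite map.} For the composite function $s_\alpha$, I would repeat the same argument with the loss applied to $s_\alpha({\bm s},F({\bm s}))$ instead of $F({\bm s})$. Proposition \ref{prop:2} replaces the factor $L_F D$ by the effective constant $L_{\bm s}+L_\alpha L_F$ multiplied by the diameter. This gives the stated $\mathcal{O}\big(L_\ell(L_{\bm s}+L_\alpha L_F D)/\sqrt n\big)$ gap, up to the placement of $D$, which is absorbed into the $\mathcal{O}$-constant.
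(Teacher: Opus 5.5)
Your argument follows the paper's proof step for step. You use Lipschitz continuity of $\ell$ in its first argument, then a bound $L_FD$ on the discrepancy between the two trained networks. You then insert $\beta$ into the postulated estimate (\ref{gengapp}) via ``expected loss $=$ empirical loss $+\,\Delta_{\mathcal T}$'', and use Proposition~\ref{prop:2} for $s_\alpha$.

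The step you single out is a genuine gap, and the paper does not close it either. Its proof simply writes $\|F_\theta(\mathcal T)-F_\theta(\mathcal T')\|\le L_F\|\mathcal T-\mathcal T'\|\le L_FD$. That is, it applies the input-Lipschitz hypothesis (\ref{hyp}) with datasets in place of inputs, which tacitly assumes that the training map $\mathcal T\mapsto F_\theta(\mathcal T)$ is $L_F$-Lipschitz in the replaced sample. Nothing in the hypotheses of Propositions~\ref{prop:propapprox}--\ref{prop:2} gives this; Proposition~\ref{prop:propapprox} is purely existential. Stating it as an explicit extra hypothesis, as you propose, is the right repair. Your use of the same test point $z$ in both loss terms is also cleaner than the paper's display, where the second arguments $\alpha,\alpha'$ differ and Lipschitz continuity in the first argument alone would not suffice.

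Your fallback justification via interpolation, however, does not give what you claim. Suppose both trained networks are $L_F$-Lipschitz and fit a shared training pair $({\bm s}_j,\alpha_j)$, $j\neq i$, to accuracy $\varepsilon$. Then
\[
|F({\bm s})-F^{(i)}({\bm s})|\le 2\varepsilon+2L_F\|{\bm s}-{\bm s}_j\|\le 2L_FD+2\varepsilon .
\]
The factor $2$ is sharp for this class of learners. Take $\mathcal S$ a segment of length $D$ with all training inputs clustered at one end. Two $L_F$-Lipschitz interpolants can leave the data with slopes $+L_F$ and $-L_F$, and then differ by almost $2L_FD$ at the other end.

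So this route gives $\beta\le 2L_\ell L_FD+2L_\ell\varepsilon$, not (\ref{betabound}). The $\varepsilon$-term cannot be absorbed into that non-asymptotic inequality. It can only go into the $\mathcal O$ of (\ref{generalizationbound1}), and only when $\varepsilon\lesssim L_FD$. Note also that such a bound never uses the fact that only one sample changed, and it does not decay with $n$. The $1/\sqrt n$ in (\ref{generalizationbound1}) comes entirely from the postulated (\ref{gengapp}), not from stability.

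For the composite map you do not need the $L_{\bm s}$ term. At a common test input only $F$ changes, so the loss difference is at most
\[
L_\ell L_\alpha|F({\bm s})-F^{(i)}({\bm s})|\le L_\ell L_\alpha L_FD\le L_\ell(L_{\bm s}+L_\alpha L_FD),
\]
which gives the stated form directly. Routing instead through Proposition~\ref{prop:2}'s factor $(L_{\bm s}+L_\alpha L_F)D$ implies the stated form only for $D\le 1$. So ``absorbing the placement of $D$'' is not harmless when $D$ is exactly the quantity being tracked.
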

     
\begin{proof}
Let ${\bm s},{\bm s}'\in \mathcal S\subset \mathbb{R}^d$,  ${\alpha}=  F_{\theta}(\mathcal{T})$, ${\alpha}'=  F_{\theta}(\mathcal{T}')$, with $\mathcal{T}$ and $\mathcal{T}'$ training sets defined according to Definition 5; then it holds
\begin{eqnarray}
&&|\ell(F_\theta({\mathcal T}),\alpha ) - \ell(F_\theta({\mathcal T}'),\alpha')|
\le L_\ell \|F_\theta({\mathcal T}) -F_\theta({\mathcal T}') \|\le\nonumber \\
&&
\le L_\ell L_F \|{\mathcal T}-{\mathcal T}'\|
\le L_\ell L_F D, \nonumber
\end{eqnarray}
with $D$ the diameter of the input space, and the (\ref{betabound}) is given.
To prove (\ref{generalizationbound1}), 
 we consider the   expected loss for the learning network $F_\theta(\mathcal{T})$
     relative to the distribution $\mathcal{D}$:
\[  {\mathcal{E}_{\mathcal D}}[\ell(F_\theta(\mathcal{T}))]= {\mathcal{E}_\mathcal{T}}[\ell(F_\theta(\mathcal{T}))]+\Delta_\mathcal{T}(\ell(F_\theta))\]
\[  |{\mathcal{E}_{\mathcal D}}[ \ell(F_\theta(\mathcal{T}))]|\leq |{\mathcal{E}_\mathcal{T}}[\ell(F_\theta(\mathcal{T}))]|+|\Delta_\mathcal{T}(F_\theta(\mathcal{T}))| \]
Using (\ref{gengapp}) we finally have
$$  |\mathbb{E}[\ell(F_\theta(\mathcal{T}),\alpha)] |
\leq  \hat{\mathcal{E}}(F_\theta) + \mathcal{O}\left( \frac{L_\ell  L_F  D}{\sqrt{n}} \right)$$ 
    
 with  $  \hat{\mathcal{E}}(F_\theta)$ defined as in (\ref{hate}). Since the learning algorithm is $\beta-$ stable, using eq. (\ref{betabound}) the thesis follows. 

 Using similar arguments and Proposition \ref{prop:2}, we get the bound for the expected generalization gap for the composite function $s_{\alpha}$.
\end{proof}

The generalization bound scales proportionally to the upper bound in (\ref{betabound}), revealing how both parameter sensitivity and input sensitivity affect stability and generalization bound. In particular, if ${\mathcal L}:=L_\ell  L_F$ and $n$ are fixed, reducing the diameter of the input space reduces the generalization gap.
It is then expected that applying a non expansive transform to the input sequences reduced the diameter and then the expected generalization gap.
Considering its property, a candidate transform is the wavelet transform.

Let $\{\phi_{J,k}\}$ be the scaling functions of a multiresolution analysis \cite{Mallat1999} and let 
$$V_J s := \sum_k \langle s, \phi_{J,k} \rangle \phi_{J,k}$$
be the approximation operator at scale $J$.
For orthonormal or biorthogonal wavelets, $V_J$ is non-expansive, that is
$$\|V_J s - V_J s'\| \le \|s-s'\|.$$
By observing that the effective diameter of a set on input data after wavelet approximation is
$$D_J := \sup_{s,s'\in\mathcal{S}} \| V_J s - V_J s'\|,$$
it easily follows that projecting into a wavelet approximation space decreases the diameter and the following proposition holds true.

\begin{proposition}
\label{prop:4}
Under the same assumption of Proposition \ref{prop:2}, assume that the loss function of the learning algorithm is
 $L_\ell-$Lipschitz in its first argument, and let
 $V_J$ be a non-expansive wavelet projection at scale $J$,
	then it holds
	$$\mathbb{E}[\ell(F_\theta(V_J \mathcal{T}),\alpha)]
	\;\le\;
	\hat{\mathcal{E}}_J(F_\theta)
	+
	\mathcal{O}\!\left( \frac{L_\ell L_F D_J}{\sqrt{n}} \right),
	$$
    with $$\hat{\mathcal{E}}_J(F_\theta) = \frac{1}{n} \sum_{i=1}^n \ell(F_\theta(V_J \mathcal{T}_i),  \alpha_i).$$
	In particular, if $D_J \ll D$, where $D=\sup_{\mathcal{T},\mathcal{T}'}\|\mathcal{T}-\mathcal{T}'\|$, then
	\[
	\text{GenGap}(J) \leq
	\mathcal{O}\!\left( \frac{D_J}{\sqrt{n}} \right)
	<
	\mathcal{O}\!\left( \frac{D}{\sqrt{n}} \right).
	\]
    with $\text{GenGap}(J)$ defined as in (\ref{gengapp}).
\end{proposition}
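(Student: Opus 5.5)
The plan is to rerun the argument of Proposition \ref{prop:3}, but for the preprocessed learning problem in which every input ${\bm s}$ is first replaced by $V_J{\bm s}$. The network $F_\theta$ is then trained on the transformed set $V_J\mathcal{T}=\{(V_J{\bm s}_i,\alpha_i)\}_{i=1}^n$. We view the composite map ${\bm s}\mapsto F_\theta(V_J{\bm s})$ as the learned hypothesis. Its inputs range over $V_J\mathcal{S}$, a set whose diameter is exactly $D_J$.

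\textbf{Step 1 (stability constant).} Let $V_J\mathcal{T}$ and $V_J\mathcal{T}^{(i)}$ differ only in the $i$-th sample, $V_J{\bm s}_i$ versus $V_J{\bm s}'_i$. We repeat the chain of inequalities from the proof of Proposition \ref{prop:3}. First use that $\ell$ is $L_\ell$-Lipschitz in its first argument, then that $F_\theta$ is $L_F$-Lipschitz by (\ref{hyp}). This gives
\[
|\ell(F_\theta(V_J\mathcal{T}),z)-\ell(F_\theta(V_J\mathcal{T}^{(i)}),z)|\le L_\ell L_F\|V_J{\bm s}_i-V_J{\bm s}'_i\|\le L_\ell L_F D_J .
\]
Hence the procedure is $\beta_J$-uniformly stable with $\beta_J\le L_\ell L_F D_J$. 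By non-expansiveness of $V_J$ we have $D_J\le D$, so $\beta_J\le\beta$ as well.

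\textbf{Step 2 (generalization bound).} We decompose the expected loss as empirical loss plus generalization gap, as in (\ref{eqDelta}):
\[
\mathcal{E}_{\mathcal D}[\ell(F_\theta(V_J\mathcal{T}))]=\hat{\mathcal{E}}_J(F_\theta)+\Delta_{V_J\mathcal{T}}.
\]
We then invoke the assumed bound (\ref{gengapp}) with $\beta$ replaced by $\beta_J$. This yields the stated inequality with remainder $\mathcal{O}(L_\ell L_F D_J/\sqrt n)$. When $\mathcal{L}=L_\ell L_F$ and $n$ are fixed, this reads $\text{GenGap}(J)\le\mathcal{O}(D_J/\sqrt n)$. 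The comparison with $\mathcal{O}(D/\sqrt n)$ follows from $D_J\ll D$, which is strictly stronger than the inequality $D_J\le D$ given by non-expansiveness.

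\textbf{Main obstacle.} The delicate point is justifying that Proposition \ref{prop:3}'s setting carries over to the transformed inputs. One issue is the constant $L_F$: the network trained on $V_J\mathcal{T}$ is a different network, and its Lipschitz constant need not equal that of the network trained on raw data. I would handle this by assuming, as in Proposition \ref{prop:2}, a uniform bound $L_F$ over the admissible hypothesis class on $V_J\mathcal{S}\subset\mathbb{R}^d$. A second issue is that the bound (\ref{gengapp}) is itself an assumption. It has to be applied with the stability constant $\beta_J$ instead of $\beta$, which is legitimate because it holds for any $\beta$-stable algorithm.
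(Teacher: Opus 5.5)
Your proposal follows the paper's proof: the same chain (Lipschitz loss, then $L_F$-Lipschitz network, then the diameter $D_J$ of the projected inputs) gives $\beta=L_\ell L_F D_J$, which is plugged into the postulated bound (\ref{gengapp}), and $D_J\le D$ comes from non-expansiveness of $V_J$; your use of a common test point $z$ is, if anything, closer to the definition of uniform stability than the paper's version, which compares losses at $\alpha$ and $\alpha'$. The paper also leaves your caveats implicit, and both arguments share a deeper one: the input-Lipschitz property (\ref{hyp}) of $F_\theta$ stands in for how much the \emph{trained} model changes when one training sample is replaced, which is what uniform stability actually measures.
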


\begin{proof}
Using the Lipschitz properties of the loss function difference, under a one-sample replacement in the training set, we get
\begin{eqnarray}
&&|\ell(F_\theta(V_J {\mathcal T}),\alpha ) - \ell(F_\theta(V_J {\mathcal T}'),\alpha')|
\le L_\ell \|F_\theta(V_J {\mathcal T})-F_\theta(V_J {\mathcal T}')\|\le\nonumber \\
&&
\le L_\ell L_F \|V_J {\mathcal T}-V_J {\mathcal T}'\|
\le L_\ell L_F D_J.\nonumber
\end{eqnarray}
Thus the learning algorithm is $\beta$-stable with $\beta = L_\ell L_F D_J$, and the stability generalization bound follows. Using the non-expansion property of the wavelet operator,  $D_J \leq D$ providing a straightforward bound for the generalization gap.
\end{proof}

The result in Proposition \ref{prop:4} depends on the resolution of the approximation level $J$. On the other hand, if $s:\mathbb{R}^d \to \mathbb{R}$ has Hölder regularity $\gamma>0$ \cite{Mallat1999,Daubechies1992}, then
$$\|s - V_J s\|_{L^2} \le C \, 2^{-J \gamma}.$$
This motivates considerations on the choice of the truncation scale $J$, balancing the approximation error (bias) and the generalization properties associated with the model complexity. In particular, the optimal value of $J$ is expected to trade off approximation accuracy and generalization performance. Coarse scales (small $J$) reduce the diameter $D_j$, thus improving generalization, but may increase the approximation error. Conversely, finer scales (large $J$) improve approximation accuracy at the cost of increased model complexity and larger $D_j$.
Therefore, while coarse-scale approximations can enhance generalization through improved stability, excessively coarse representations may significantly degrade accuracy. This trade-off warrants further investigation.

\section{Numerical Results}
\label{sec:4}
This section validates the theoretical findings through numerical simulations.
The tests are executed on a personal computer equipped with a $2.80$ GHz Intel Core(TM) $i7-7700HQ$ CPU and $16.0$ GB of RAM, and the algorithms are implemented in  Matlab  R2025b.

Accuracy and stability are evaluated through standard vector-valued metrics:
\begin{itemize}
    \item the \textit{Mean Squared Error}
    $$MSE(\mathbf{v}, \hat{\mathbf{v}}) 
        = \frac{1}{d} \sum_{j=1}^{d} \big( v_j - \hat{v}_j \big)^2, \qquad \mathbf{v}, \hat{\mathbf{v}} \in \mathbb{R}^d$$
    quantifying the average squared deviation;
    \item the \textit{Infinity Norm Error}
    \[\|\mathbf{v} - \hat{\mathbf{v}}\|_\infty 
        = \max_{1 \le j \le d} |v_j - \hat{v}_j|, \qquad \mathbf{v}, \hat{\mathbf{v}} \in \mathbb{R}^d\]
    capturing the largest pointwise deviation, and
   \item the \textit{Relative Error}
            \[
RE(\mathbf{v},\hat{\mathbf{v}}) = \frac{\|\mathbf{v} - \hat{\mathbf{v}}\|_2}{\|\mathbf{v}\|_2}, \qquad \mathbf{v}, \hat{\mathbf{v}} \in \mathbb{R}^d\
\]
quantifying the relative deviation with respect to the magnitude of the reference vector.
\end{itemize}

\subsection{Parameter approximation and error propagation} \label{sec:test_accuracy}
The approximation results in \cite{Yarotsky2017} and Proposition~\ref{prop:propapprox} ensure the existence of neural network architectures with controlled complexity that can achieve arbitrarily small error in approximating $\alpha$  and the corresponding HP-spline reconstruction. However, these results are non-constructive and do not provide explicit design guidelines. Therefore, the theoretical analysis is complemented with an empirical investigation to evaluate the approximation capabilities of \textit{ReLU} networks.

The training and test datasets are constructed from a parameter function $\alpha(x)$ defined on the interval $x \in [0,1]$,  representing the quantity to be predicted by the network. The function is evaluated at discrete points $x_i$, producing the values $\alpha_i = \alpha(x_i)$. Each $\alpha_i$ is then used to generate network input data
\[s_{\alpha_i}(t) = e^{-\alpha_i t},\]
while $\alpha_i$ serves as the corresponding target output.  
The functions $\alpha(x)$ are selected to satisfy the theoretical assumptions in Proposition~\ref{prop:propapprox}.
Both smooth functions
\[\alpha_1(x) = \frac{1}{1+x}, \quad 
\alpha_2(x) = 1 + \frac{1}{2}\sin(2\pi x),\]
and functions with locally reduced regularity, such as
\[\alpha_3(x) = x^2 + |x-x_0|^{k-0.5}, \quad x_0 = 0.37, \ k=2\]
as well as the piecewise quadratic function
\[\alpha_4(x) =
\begin{cases}
x^2, & x \le \tfrac{1}{3},\\[1mm]
\frac{11}{3} x - \frac{10}{9}, & \tfrac{1}{3} < x \le 0.6,\\[1mm]
\frac{49}{45}, & 0.6 < x \le 0.65,\\[1mm]
3 t^2, & x > 0.65
\end{cases}\]
are considered to evaluate the network’s performance across different regularity conditions.

For each function, fully connected networks with fixed depth \(L \in \{3,4,5\}\) are trained, increasing the width \(W\) until the approximation condition \(\|\alpha({\bm s}) - F_\theta({\bm s})\|_\infty \le \varepsilon\) is satisfied. Each architecture has uniform width across layers. For each configuration, the corresponding network complexity \(C_{\mathrm{tot}}\) is reported, defined as in Section 2.2, depending on  \((L,W)\) and the input dimension \textcolor{blue}{$d$}, which is set to 32 in this study.

Table~\ref{tab:metrics} summarizes the results for both smooth functions \(\alpha_1, \alpha_2\) and functions with isolated singularities \(\alpha_3, \alpha_5\). For each \(\varepsilon\) and $L$, the table reports the architecture achieving the target precision, along with the corresponding complexity and error metrics. In particular, the reported metrics refer to parameter approximation errors, HP-spline reconstruction errors computed using both predicted and nominal parameters, and the propagation error, which measures the discrepancy between the reconstructed signals obtained with the predicted and nominal parameters, which quantifies the effect of approximation errors in \(\alpha\) on the reconstructed HP-spline signals.

\begin{table}[H]
\centering
\footnotesize
\setlength{\tabcolsep}{4pt}

\begin{tabular}{|c|ccc|cc|cc|cc|c|}
\hline
\multirow{2}{*}{$\epsilon$} &
\multicolumn{3}{c|}{Architecture} &
\multicolumn{2}{c|}{Parameter error} &
\multicolumn{4}{c|}{HP-spline Reconstruction error} &
\multicolumn{1}{c|}{Propagation} \\

\multirow{2}{*}{} &
\multicolumn{3}{c|}{} &
\multicolumn{2}{c|}{} &
\multicolumn{4}{c|}{} &
\multicolumn{1}{c|}{error} \\
\cline{2-11}

& $L$ & $W$ & $C_{\mathrm{tot}}$
& $Max_\alpha$ & $MSE_\alpha$
& $Max_{\mathrm{pred}}$ & $MSE_{\mathrm{pred}}$
& $Max_{\mathrm{nom}}$ & $MSE_{\mathrm{nom}}$
& $MSE$ \\
\hline
\multicolumn{11}{|c|}{$\mathbf{\alpha_1(x)}$} \\ \hline
0.10 & 3 & 1 & 35  & 3.67e-03 & 1.02e-06 & 1.84e-09 & 5.96e-22 & 1.36e-09 & 1.53e-22 & 4.46e-22 \\
0.07 & 3 & 2 & 69  & 2.35e-02 & 1.06e-04 & 1.82e-09 & 7.29e-22 & 1.55e-09 & 2.91e-22 & 4.46e-22 \\
0.008 & 3 & 3 & 103 & 6.24e-03 & 1.79e-06 & 3.91e-08 & 1.20e-19 & 3.91e-08 & 1.20e-19 & 4.46e-22 \\
0.10 & 4 & 3 & 115 & 2.02e-03 & 2.99e-07 & 1.73e-08 & 2.91e-20 & 1.73e-08 & 2.87e-20 & 4.46e-22 \\
0.07 & 4 & 3 & 115 & 1.75e-02 & 2.30e-05 & 4.46e-09 & 1.34e-21 & 4.45e-09 & 8.98e-22 & 4.46e-22 \\
0.008 & 4 & 2 & 75  & 2.01e-04 & 4.49e-09 & 1.82e-09 & 4.68e-22 & 5.91e-10 & 2.18e-23 & 4.46e-22 \\
0.10 & 5 & 2 & 81  & 1.37e-02 & 1.79e-05 & 3.41e-09 & 1.37e-21 & 3.40e-09 & 9.22e-22 & 4.46e-22 \\
0.07 & 5 & 1 & 39  & 5.28e-02 & 5.39e-04 & 1.86e-09 & 6.95e-22 & 1.45e-09 & 2.52e-22 & 4.46e-22 \\
\hline

\multicolumn{11}{|c|}{$\mathbf{\alpha_2(x)}$} \\ \hline
0.10 & 3 & 2 & 69  & 9.27e-04 & 2.89e-07 & 1.99e-10 & 6.03e-24 & 1.95e-09 & 2.74e-22 & 2.79e-22 \\
0.07 & 3 & 1 & 35  & 3.01e-03 & 2.21e-06 & 3.82e-10 & 1.31e-23 & 1.95e-09 & 2.74e-22 & 2.87e-22 \\
0.008 & 3 & 1 & 35 & 8.91e-04 & 3.27e-07 & 9.07e-10 & 5.57e-23 & 1.95e-09 & 2.74e-22 & 3.28e-22 \\
0.10 & 4 & 2 & 75  & 1.47e-02 & 2.07e-05 & 3.95e-09 & 7.71e-22 & 1.95e-09 & 2.74e-22 & 1.04e-21 \\
0.07 & 4 & 1 & 37  & 1.68e-02 & 1.08e-04 & 1.92e-08 & 8.41e-21 & 1.95e-09 & 2.74e-22 & 8.68e-21 \\
0.10 & 5 & 3 & 127 & 7.32e-02 & 5.77e-04 & 3.66e-09 & 6.98e-22 & 1.95e-09 & 2.74e-22 & 9.73e-22 \\
0.07 & 5 & 1 & 39  & 1.90e-02 & 6.56e-05 & 7.84e-10 & 4.50e-23 & 1.95e-09 & 2.74e-22 & 3.19e-22 \\
\hline

\multicolumn{11}{|c|}{$\mathbf{\alpha_3(x)}$} \\ \hline
0.10 & 3 & 1 & 35  & 4.20e-02 & 1.71e-04 & 2.43e-08 & 9.77e-20 & 3.36e-08 & 1.19e-19 & 2.05e-19 \\
0.07 & 3 & 2 & 69  & 5.82e-02 & 4.78e-04 & 1.57e-07 & 2.13e-18 & 3.36e-08 & 1.19e-19 & 2.25e-18 \\
0.10 & 4 & 3 & 115 & 6.39e-02 & 4.47e-04 & 1.14e-07 & 1.48e-18 & 3.36e-08 & 1.19e-19 & 1.60e-18 \\
0.07 & 4 & 3 & 115 & 5.15e-03 & 2.44e-06 & 4.86e-08 & 2.46e-19 & 3.36e-08 & 1.19e-19 & 3.64e-19 \\
0.10 & 5 & 5 & 231 & 4.55e-02 & 9.24e-05 & 4.06e-08 & 2.62e-19 & 3.36e-08 & 1.19e-19 & 3.80e-19 \\
0.07 & 5 & 2 & 81  & 4.19e-03 & 2.22e-06 & 1.55e-08 & 3.67e-20 & 3.36e-08 & 1.19e-19 & 1.56e-19 \\
\hline

\multicolumn{11}{|c|}{$\mathbf{\alpha_4(x)}$} \\ \hline
0.10 & 3 & 1 & 35  & 5.47e-02 & 2.69e-04 & 2.46e-08 & 1.11e-19 & 2.29e-07 & 8.53e-18 & 8.68e-18 \\
0.07 & 3 & 1 & 35  & 6.60e-03 & 7.00e-06 & 1.04e-06 & 1.41e-17 & 2.29e-07 & 8.53e-18 & 2.26e-17 \\
0.10 & 4 & 3 & 115 & 3.52e-02 & 1.29e-04 & 1.08e-07 & 1.49e-18 & 2.29e-07 & 8.53e-18 & 1.01e-17 \\
0.07 & 4 & 1 & 37  & 4.34e-03 & 2.12e-06 & 4.48e-07 & 1.50e-17 & 2.29e-07 & 8.53e-18 & 2.34e-17 \\
0.10 & 5 & 4 & 177 & 7.05e-02 & 3.99e-04 & 7.54e-07 & 5.61e-17 & 2.29e-07 & 8.53e-18 & 6.35e-17 \\
0.07 & 5 & 2 & 81  & 5.81e-02 & 2.22e-04 & 3.05e-08 & 8.84e-20 & 2.29e-07 & 8.53e-18 & 8.60e-18 \\
0.008 & 5 & 1 & 39 & 5.96e-03 & 3.20e-06 & 3.78e-06 & 1.66e-16 & 2.29e-07 & 8.53e-18 & 1.75e-16 \\
\hline
\end{tabular}

\caption{
Approximation results for the functions \(\alpha_1,\dots,\alpha_4\), including both smooth and singular cases. 
For each \(\varepsilon\), the table reports the network architecture \((L,W)\), the total complexity \(C_{\mathrm{tot}}\). 
The approximation errors are given in terms of infinity norm (\(Max_\alpha\)) and mean squared error (\(MSE_\alpha\)), 
HP-spline reconstruction errors are computed using  uniformly spaced knots with step size \(0.1\), 
and are reported for both predicted and nominal parameters. 
The propagation error measures the discrepancy between the corresponding reconstructed signals, quantifying the effect of approximating \(\alpha\).
} 
\label{tab:metrics}
\end{table}

For all $\alpha$ functions, the HP-spline reconstructions using the predicted parameters \(F_\theta(s)\) closely match those obtained with the nominal parameters \(\alpha(s)\). The propagation error remains consistently  smaller than the intrinsic spline error, indicating that inaccuracies in the neural approximation have minimal impact on the final reconstruction, even in the presence of few singular points. This demonstrates that moderately-sized \textit{ReLU} networks can accurately approximate the parameters  without compromising HP-spline reconstruction quality.

It should be noted that the  architectures in Table \ref{tab:metrics} provide only an empirical indication: failure to meet the prescribed tolerance for a given configuration does not preclude the existence of alternative networks of similar complexity that could achieve the same accuracy.

\subsection{Empirical evaluation of stability based on general gap bound}

In this section, we adopt an empirical approach to assess stability. We consider a neural network with \textit{ReLU} activation trained on a dataset ${\cal T}$. The test set is constructed as a slight perturbation of ${\cal T}$, obtained by adding noise to a limited fraction of the data. After training, we compute the loss on the training set, $Loss_{\mathrm{train}}$, and the loss on the test set, $Loss_{\mathrm{test}}$. The generalization gap is then defined empirically as
\[
\mathrm{GenGap} = \bigl|Loss_{\mathrm{train}} - Loss_{\mathrm{test}}\bigr|.
\]

In this setting, the theoretical results discussed in the Section \ref{sec:stabilitybound} suggest that the generalization gap should increase with the dataset diameter and decrease as the sample size grows. In particular, the expected behavior is 
\[
\mathrm{GenGap} \le O\!\left(\frac{D}{\sqrt{n}}\right),
\qquad
\mathrm{GenGap}_J \le O\!\left(\frac{D_J}{\sqrt{n}}\right),
\]
where $n$ and $D$ are the size and diameter of ${\cal T},$ respectively, and $D_J$ the corresponding diameter after wavelet projection at level $J$.  

In our experiments, we vary both the sample size and the dataset diameter. We consider synthetic signals of the form
\[
s(t) = A \sum_{k=1}^{5} 2^{-k\alpha}\cos(2^{k}\pi t), \qquad t \in [0,1],
\]
where $\alpha$ is randomly sampled in $[0.5,5]$, while the amplitude $A \in \{0.5,1,2,4.5,5,6,8,10.5\}$ controls the overall scale and hence the diameter.

The training dataset consists of $n$ signals, with $n \in [32, 64, 128, 256, 512]$, each sampled on a uniform grid of $256$ points in $[0,1]$. The test set is obtained as a perturbed version of the training set by adding independent Gaussian noise $\varepsilon \sim \mathcal{N}(0,\sigma^2)$ with $\sigma=10^{-2}$ to a fraction of the training data ($30\%$), while leaving the remaining samples unchanged. This construction is closely related to the uniform stability condition \cite{Bousquet2002}.

The same procedure is repeated after projecting both training and test data into wavelet approximation spaces for each level $J \in \{1,2,3,4\}$.
\\

Figure \ref{fig:gengap_bound_vs_n_A} shows the behavior of the empirical generalization gap (\emph{GenGap}) and the theoretical bound $D/\sqrt{n}$ as functions of the sample size $n$, for some values of the parameter $A$. The behavior of the two quantities confirms the theoretical findings.

\begin{figure}[H]
    \centering
    
    \begin{subfigure}{0.45\textwidth}
        \centering
        \includegraphics[width=\linewidth]{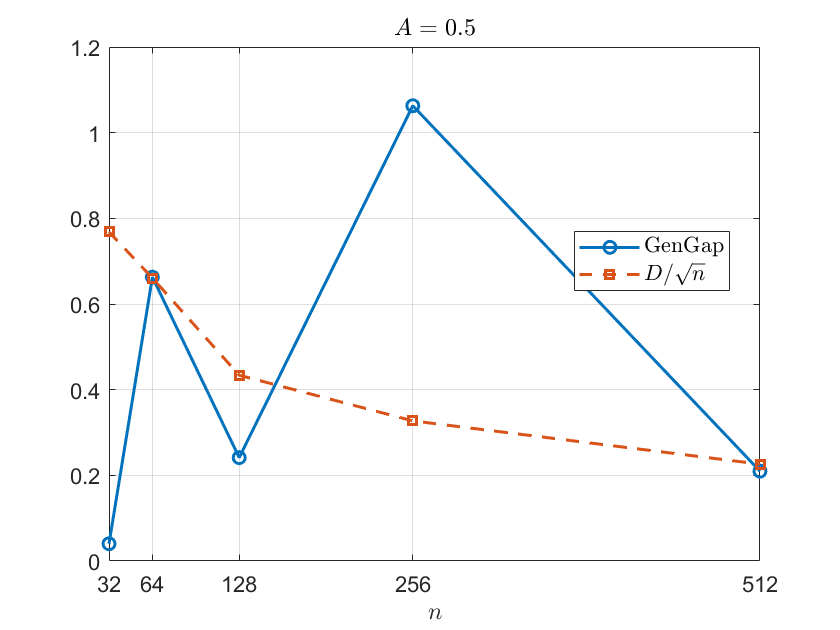}
        \caption{$A = 0.5$}
    \end{subfigure}
    \hfill
    \begin{subfigure}{0.45\textwidth}
        \centering
        \includegraphics[width=\linewidth]{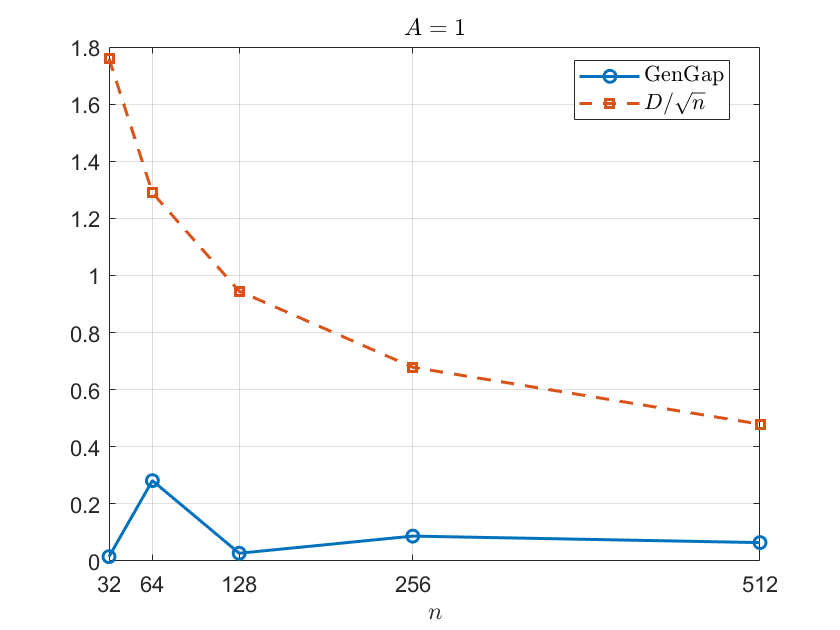}
        \caption{$A = 1$}
    \end{subfigure}
    
    \vspace{0.5cm}
    
    \begin{subfigure}{0.45\textwidth}
        \centering
        \includegraphics[width=\linewidth]{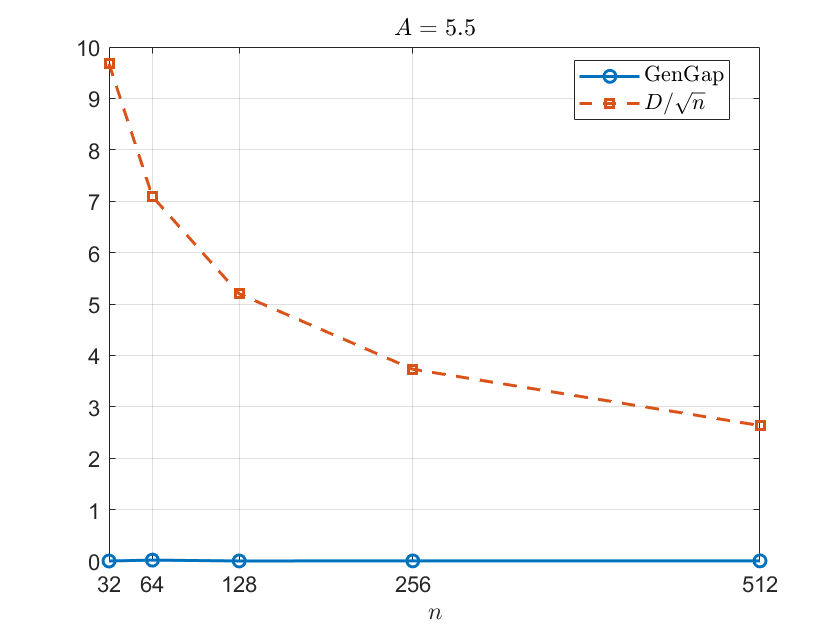}
        \caption{$A = 5.5$}
    \end{subfigure}
    \hfill
    \begin{subfigure}{0.45\textwidth}
        \centering
        \includegraphics[width=\linewidth]{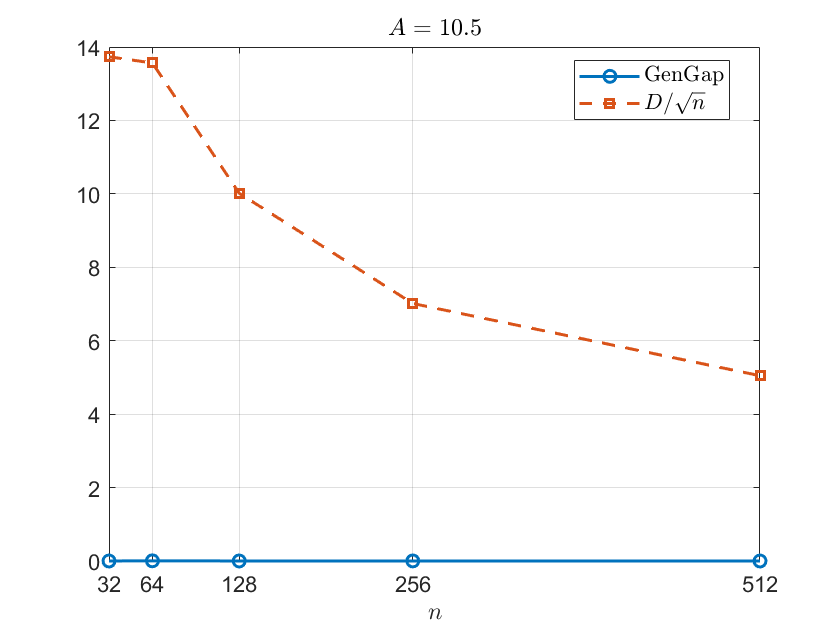}
        \caption{$A = 10.5$}
    \end{subfigure}
    
    \caption{Comparison between the empirical generalization gap (\emph{GenGap}) and the theoretical bound $D/\sqrt{n}$ as a function of the sample size $n$, for different values of the parameter $A \in \{0.5, 1, 5.5, 10\}$. Each panel shows how the empirical gap relates to its corresponding bound as $n$ increases.}
    
    \label{fig:gengap_bound_vs_n_A}
\end{figure}

We extend the previous analysis to the case where a non-expansive wavelet projection $V_J$ is applied to the input data. Figure \ref{fig:GenGapj} illustrates the comparison between empirical and theoretical quantities for each $J$ and $A=1$.

\begin{figure}[H]
    \centering
    
    \begin{subfigure}{0.45\textwidth}
        \centering
        \includegraphics[width=\linewidth]{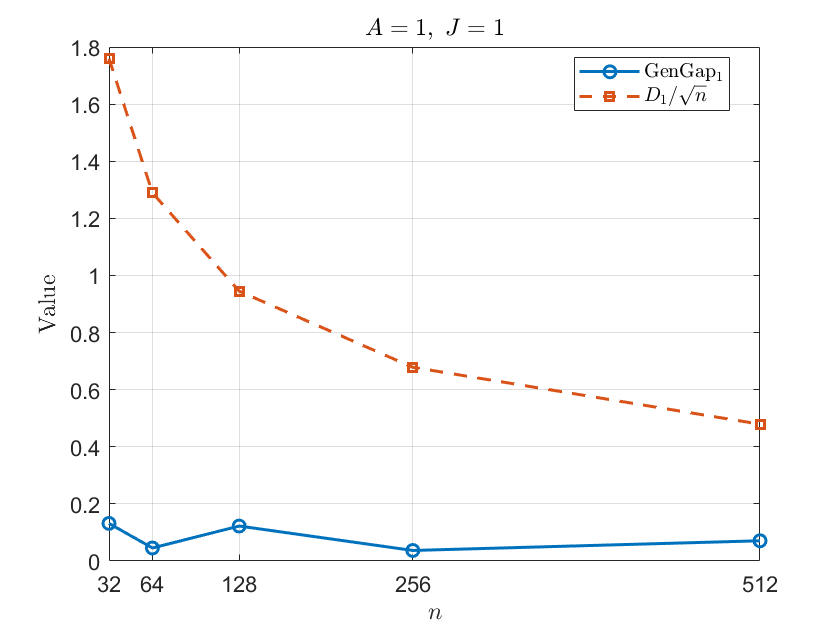}
        \caption{$j = 1$}
    \end{subfigure}
    \hfill
    \begin{subfigure}{0.45\textwidth}
        \centering
        \includegraphics[width=\linewidth]{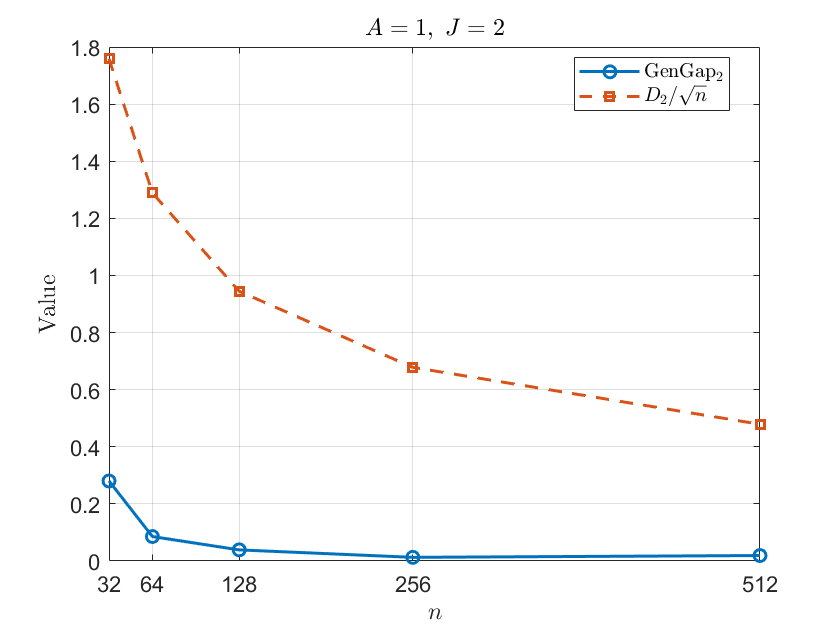}
        \caption{$j = 2$}
    \end{subfigure}
    
    \vspace{0.5cm}
    
    \begin{subfigure}{0.45\textwidth}
        \centering
        \includegraphics[width=\linewidth]{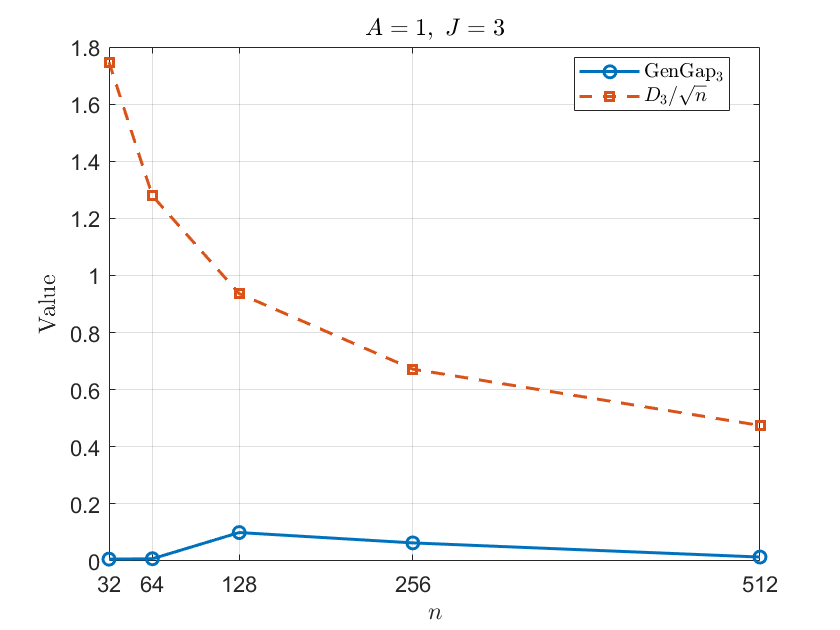}
        \caption{$j = 3$}
    \end{subfigure}
    \hfill
    \begin{subfigure}{0.45\textwidth}
        \centering
        \includegraphics[width=\linewidth]{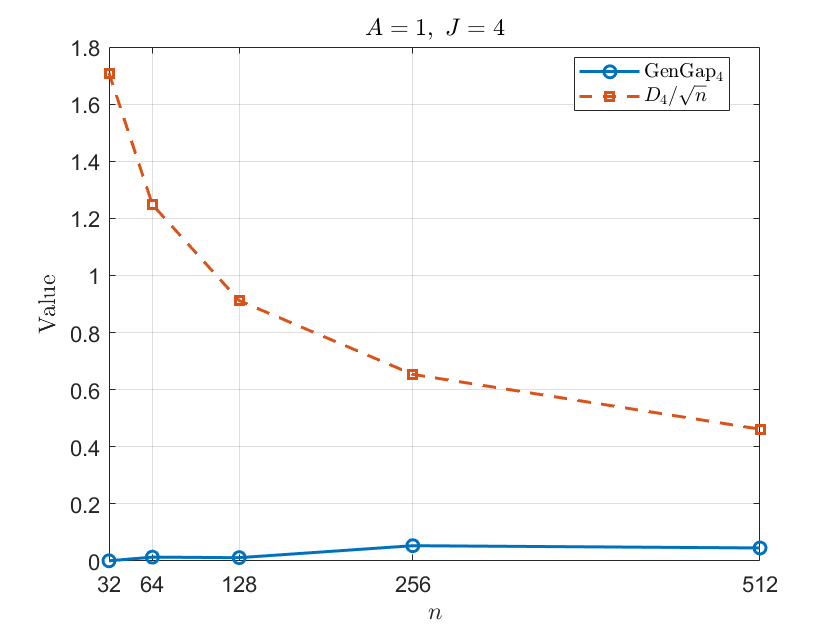}
        \caption{$J = 4$}
    \end{subfigure}
    
    \caption{Comparison between the empirical generalization gap $\mathrm{GenGap}_J$ and corresponding theoretical bound $D_j/\sqrt{n}$ as functions of the sample size $n$, for $J \in \{1,2,3,4\}$ and $A = 1$.}
    
    \label{fig:GenGapj}
\end{figure}

The empirical results indicate that the generalization gap behaves as expected, decreasing with the sample size $n$ and remaining well aligned with the theoretical bounds. The wavelet projections do not compromise stability; in fact, the projected gaps closely follow their respective bounds, which are often smaller than those of the original signals, effectively enhancing stability while preserving the essential statistical structure and filtering out noise. For small values of $A$, where the signal amplitude is low relative to the added noise, the signal-to-noise ratio is reduced, which can slightly affect the alignment with the bound; however, the overall stability and expected behavior remain evident.

\subsection{Comparative assessment of the frequency parameter estimation} 
In this section, we introduce a new set of numerical experiments that aims to assess the effectiveness of the proposed neural network approach to estimate the frequency parameter $\alpha$. The performance of the network is evaluated compared to the algorithm available in the literature \cite{campagnaconticuomo}, which provides an optimal estimate of $\alpha$ through a dedicated optimization procedure.

To thoroughly investigate the generalization capability of the proposed approach, three different scenarios are considered. Each scenario is characterized by a specific choice of training and test datasets, allowing us to progressively increase the level of complexity and mismatch between training and testing conditions. In all scenarios, once the network is trained, the estimation of $\alpha$ is performed on the test data and compared with the corresponding optimal values computed via the reference algorithm. The resulting $\alpha$ values are then used within the HP spline framework to reconstruct the target functions, enabling a direct comparison in terms of both parameter accuracy and reconstruction performance.

In the first scenario, the training data consist of simple exponential functions for which the nominal values of $\alpha$ are known. In \eqref{eq:exp} we report just an example of simple exponential data:
\begin{equation}\label{eq:exp}
  s_1(t) = Ae^{\alpha t}, \qquad s_2(t)=Ate^{-\alpha t}.  
\end{equation}

The test data are constructed by perturbing the same functions with additive Gaussian noise $\varepsilon \sim \mathcal{N}(0,\sigma^2)$ and $\sigma=10^{-2}$. This setting allows us to evaluate the robustness of the neural network with respect to noise while maintaining consistency between the training and testing distributions.

In the second scenario, the training data remains unchanged, still consisting of simple exponential functions with known $\alpha$. However, test data are generated using more complex multi-exponential functions as defined in \eqref{eq:multiexp}, introducing a structural mismatch between training and testing data
\begin{equation}\label{eq:multiexp}
    s_3(t) = \frac{t}{2} e^{-2\alpha} + e^{-\frac{\alpha}{2}t}.
\end{equation}

This scenario is designed to assess the extrapolation capability of the network when applied to functions of increased complexity.

In the third scenario, the training dataset is enriched by including both simple exponential functions (with known $\alpha$) and multi-exponential functions. For the latter, the associated values $\alpha$ are not known a priori but are estimated using the reference optimization algorithm. The test data coincide with the multi-exponential functions considered in the second scenario. This setting allows us to evaluate whether increasing the training set with more representative and heterogeneous data improves the predictive performance of the network.

Figure~\ref{fig:HPcomparison} reports the comparison of predicted vs optimal $\alpha$ for some test data in the three scenarios. Performance is measured in terms of \textit{MSE} and \textit{RE} for the HP-spline reconstruction.

\begin{figure}[H]
    \centering
    
    \begin{subfigure}{0.6\linewidth}
        \centering
        \includegraphics[width=\linewidth]{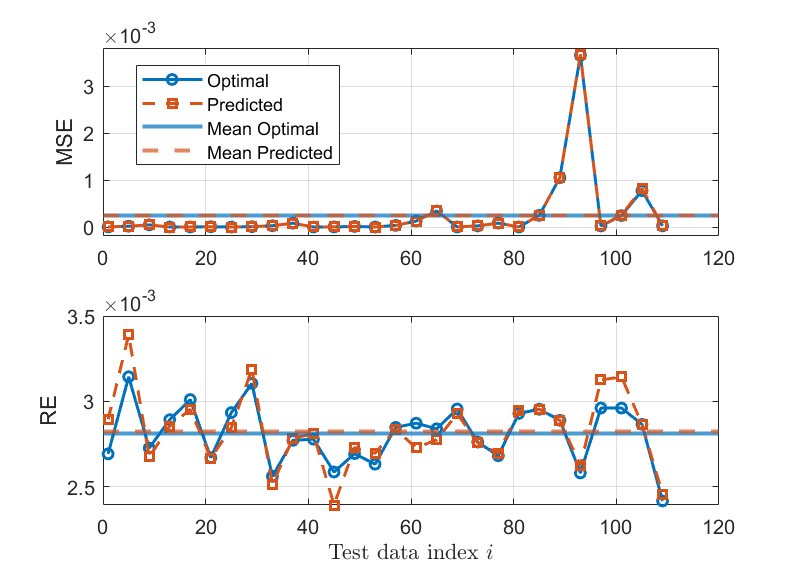}
        \caption{Scenario 1}
    \end{subfigure}
    
    \vspace{0.5cm}
    
    \begin{subfigure}{0.45\linewidth}
        \centering
        \includegraphics[width=\linewidth]{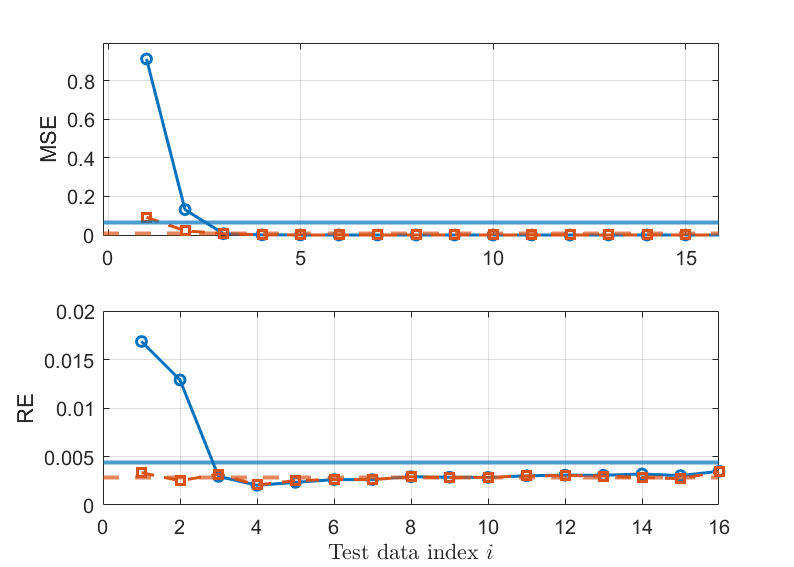}
        \caption{Scenario 2}
    \end{subfigure}
    \hfill
    \begin{subfigure}{0.45\linewidth}
        \centering
        \includegraphics[width=\linewidth]{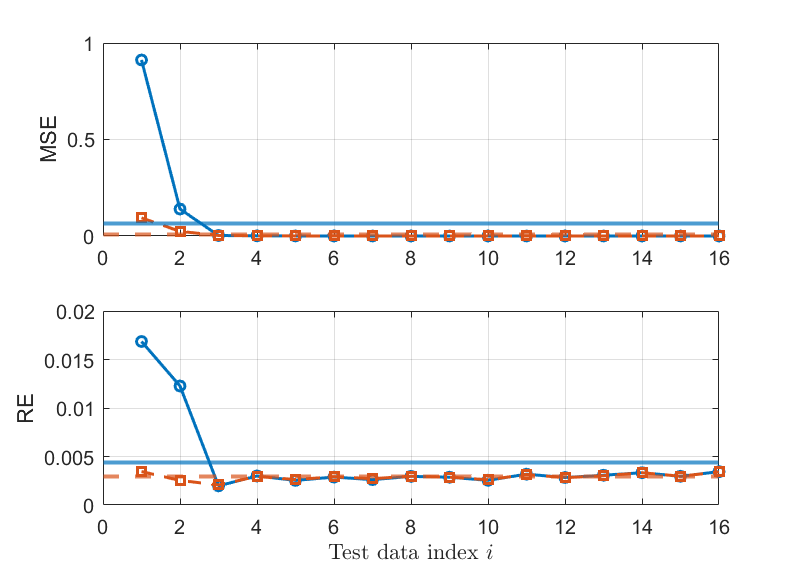}
        \caption{Scenario 3}
    \end{subfigure}
    
    \caption{Comparison of HP-spline regression performance using predicted and optimal $\alpha$ across three scenarios. Solid and dashed lines represent the two methods, while horizontal dotted lines indicate their corresponding mean values. Results are reported in terms of \textit{MSE} and \textit{RE} on a logarithmic scale, computed for each individual test data instance using neural network-based predictions of $\alpha$. HP-spline reconstruction errors are evaluated on the corresponding test signals using uniformly spaced knots with step size \(0.1\).}
    
    \label{fig:HPcomparison}
\end{figure}

Overall, the neural network provides a reliable and computationally efficient estimate of the HP-spline frequency parameter. The predicted $\alpha$ consistently reproduces the regression accuracy of the optimal $\alpha$, even in scenarios with noise or function extrapolation. This validates the approach as a practical alternative to exhaustive search $\alpha$, allowing fast and accurate HP-spline reconstruction in various contexts.

\section*{Conclusions}
In this paper a neural network-based strategy for the estimation of the frequency parameter in HP-splines has been developed and analyzed. The parameter selection problem has been reformulated in a functional setting, enabling its approximation through deep architectures with controlled complexity. Under suitable regularity assumptions, bounds for the induced HP-spline approximation error have been established. In addition, the stability analysis of the whole learning procedure provided further insight into the robustness of the proposed approach, allowing control of the associated generalization gap via uniform stability arguments for the composite model.
The numerical results support the theoretical analysis, demonstrating that neural networks can reliably estimate the frequency parameter in HP-splines while maintaining high reconstruction accuracy and stability even in  scenarios with reduced smoothness. The results also indicate that moderately sized networks are sufficient for practical applications, and network design can remain flexible without compromising accuracy.
These findings suggest that learning-based parameter selection constitutes a viable alternative to classical approaches. Further investigation is required in higher-dimensional settings, where tensor-product constructions and directional parameter estimation significantly increase computational complexity. Refinements of approximation error bounds in more complex scenarios also remain an open direction for future research.

  \section*{Acknowledgements}
The authors are members of the INdAM research group GNCS, which has
partially supported this work. This research has been accomplished within the UMI-TAA group, of which      the   authors are members, and UMI AI\&ML\&MAT group. RC was partially supported by the Italian MUR through the PRIN2022 Project 'Numerical Optimization with Adaptive Accuracy and Applications to Machine Learning',   code: 2022N3ZNAX,  and 
the PRIN2022-PNRR Project 'A multidisciplinary approach to evaluate ecosystems resilience under climate change',   code: P2022WC2ZZ. VB and DV have been partially supported by PNRR-CN1 SPOKE 6 - Multiscale modeling engineering applications B83C22002940006.

 \section*{Declarations}
The MATLAB code used in this study is available from the authors upon request.

\end{document}